\theoremstyle{plain}
\newtheorem{prop}{Proposition}
\newtheorem{thrm}[prop]{Theorem}
\newtheorem{lem}[prop]{Lemma}
\theoremstyle{definition}
\title{Short proof of Rademacher's formula for partitions}
\author{Wladimir de Azevedo Pribitkin}    
\address{Department of Mathematics \\ College of Staten Island, CUNY \\ Staten Island, NY 10314 \newline 
\indent \&  Department of Mathematics \\ The Graduate Center, CUNY \\ New York, NY 10016}  
\email{Wladimir.Pribitkin@csi.cuny.edu; w\_pribitkin@msn.com}
\author{Brandon Williams}
\address{Department of Mathematics \\ University of California \\ Berkeley, CA 94720}
\email{btw@math.berkeley.edu}
\subjclass[2010]{11F20,11P82}
\begin{document}

\maketitle

\nocite{*}

\begin{abstract} This note rederives a formula for $r$-color partitions, $1 \le r \le 24$, including Rademacher's celebrated result for ordinary partitions, 
from the duality between modular forms of weights $-r/2$ and $2+r/2$.
\end{abstract}

\section{Introduction}

Let $\eta(\tau) = q^{1/24} \prod_{n=1}^{\infty} (1 - q^n)$, where $q = e^{2\pi i \tau}$ and $\tau \in \mathbb{H} = \{x+iy :  y > 0\}$, denote the 
\textbf{Dedekind eta-function}, a highly familiar cusp form of weight $1/2$. To be more precise, let $\Gamma = SL_2(\mathbb{Z}) $ be the full modular group, 
which is generated by $S = \big[{1 \atop 0}\,\,{1 \atop 1}\big]$ and $T = \big[{0 \atop 1}\,\,{-1 \atop 0}\big]$. Then $\eta$ transforms by
$$\eta(M \tau) = v_\eta(M) \sqrt{c \tau + d} \; \eta(\tau),$$
for all $M = \big[{* \atop c}\,\,{* \atop d}\big] \in \Gamma$, where the 
\textbf{eta multiplier} $v_\eta \colon \Gamma  \xrightarrow{\mbox{\tiny onto}} \{ \mbox{the $24$\textsuperscript{th} roots of unity} \}$ 
is determined by $v_\eta(S) = e^{\pi i / 12}$ and $v_\eta(T) = e^{-\pi i / 4}$. Here and below, $M$ acts upon $\mathbb{H}$ as usual and the roots are extracted
according to the convention that $ - \pi \le \mbox {arg } z < \pi$, for  $z \in \mathbb{C}^{\times}$. Note that $v_\eta^2$ is a character on $\Gamma$ and $v_\eta$
itself corresponds to a character on the metaplectic group $Mp_2(\mathbb{Z})$. Two different closed formulas are known for $v_\eta$ 
(see, for example, \cite[Section 6]{A} and \cite[Chapter 4]{K}).  \\      

Euler observed that the Fourier coefficients of $\eta^{-1}$ are very interesting: 
$$\eta(\tau)^{-1} = q^{-1/24} \sum_{n=0}^{\infty} p(n) q^n,$$ 
where the \textbf{partition function} $p(n)$ counts the number of ways to write $n > 0$ as an unordered sum of positive integers and $p(0) = 1$ by convention. 
More generally, 
$$\eta(\tau)^{-r} = q^{-r/24} \sum_{n=0}^{\infty} p_r(n) q^n,$$ 
where $p_r(n)$ enumerates the number of \textbf{$r$-color partitions} of $n > 0$, which permit parts to appear in $r$ different `colors' (with their order disregarded), and $p_r(0)=1$.
The modularity of $\eta$ is a powerful tool in the study of partitions. For example, Poisson summation shows that the series    
$q^{1/24} \sum_{n \in \mathbb{Z}} (-1)^n q^{n(3n - 1)/2}$ is a cusp form of the same weight and multiplier on $\Gamma$, so Euler's Pentagonal Number Theorem 
$$\eta(\tau) = q^{1/24} \sum_{n \in \mathbb{Z}} (-1)^n q^{n(3n - 1)/2}$$ 
follows after comparing only the coefficients of $q^{1/24}$ on both sides. In turn this identity implies Euler's recursive formula for $p(n)$. 
A much newer result of Bruinier and Ono \cite{BO} finds an intriguing finite algebraic formula for $p(n)$ in terms of traces of singular moduli of a distinguished weak Maass form of level $6$. 
For purposes of computation, however, the most remarkable result is the following one due to Rademacher. 
\begin{thrm} 
If $n \in \mathbb{N}$, then 
$$p(n) = \frac{1}{\pi \sqrt{2}} \sum_{c=1}^{\infty}  A_c(n) \sqrt{c} \, \frac{d}{dn} \bigg[ \frac{\sinh ( \mu \sqrt{n - 1/24} /c )}{\sqrt{n - 1/24}} \bigg],$$ 
where $\mu = \pi \sqrt{2/3}$ and $A_c(n) = e^{\pi i / 4} A(-1/24,n-1/24;c)$, with $A(\cdot,\cdot;c)$ defined in Lemma 2.  
\end{thrm}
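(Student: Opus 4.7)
The plan is to exploit the duality between weight $-1/2$ (where $\eta^{-1}$ lives) and weight $5/2$ suggested by the abstract, via a residue theorem argument applied to the product $P_n \cdot \eta^{-1}$, where $P_n$ is a suitable weight $5/2$ Poincar\'e series.

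For each $n \geq 1$, I would introduce the holomorphic Poincar\'e series
$$P_n(\tau) = \sum_{\gamma \in \Gamma_\infty \backslash \Gamma} \overline{v_\eta(\gamma)}\, (c\tau + d)^{-5/2}\, e^{2\pi i (1/24 - n) \gamma\tau},$$
a weakly holomorphic form of weight $5/2$ and multiplier $v_\eta$ on $\Gamma$. Because $5/2 > 2$, the sum converges absolutely. A standard Bruhat unfolding produces
$$P_n = q^{1/24 - n} + \sum_{k \geq 0} b_n(k)\, q^{k + 1/24},$$
where each $b_n(k)$ is an explicit sum over $c \geq 1$ of Kloosterman-type sums $A(\cdot,\cdot;c)$ (as in Lemma 2) multiplied by the modified Bessel function $I_{3/2}\bigl(4\pi\sqrt{(n - 1/24)(k + 1/24)}/c\bigr)$; the modified Bessel function appears rather than $J_{3/2}$ because the seed exponent $1/24 - n$ is negative. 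At $k = 0$, the happy coincidence $4\pi\sqrt{(n - 1/24)/24}/c = \mu\sqrt{n - 1/24}/c$ together with the half-integral identity $I_{3/2}(z) = \sqrt{2/(\pi z)}(\cosh z - \sinh z / z)$ rearranges the Bessel factor into precisely $\frac{d}{dn}\bigl[\sinh(\mu\sqrt{n - 1/24}/c)/\sqrt{n - 1/24}\bigr]$, up to explicit constants and a power of $c$.

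Next, I would apply the residue theorem. Because the multipliers $v_\eta$ and $v_\eta^{-1}$ cancel, the product $P_n \cdot \eta^{-1}$ is a meromorphic weight $2$ modular form on $\Gamma$, holomorphic on $\mathbb{H}$ with a single pole of order $n$ at $\infty$. Hence $P_n \eta^{-1} \, d\tau$ descends to a meromorphic differential on the compactified modular curve $X(1)$; its residues sum to zero, forcing the coefficient of $q^0$ in the $q$-expansion of $P_n \eta^{-1}$ to vanish. Multiplying out the two expansions and extracting the constant term gives $p(n) + b_n(0) = 0$, so $p(n) = -b_n(0)$. Substituting the formula for $b_n(0)$ from the first step yields Rademacher's formula, with the phase $e^{\pi i/4}$ and a standard Kloosterman-sum conjugation symmetry absorbed into the packaging $A_c(n) = e^{\pi i / 4} A(-1/24, n - 1/24; c)$ of the statement.

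The hard part will be careful bookkeeping of half-integral weight conventions and multiplier systems: verifying that $P_n$ is genuinely modular on all of $\Gamma$ with the claimed multiplier, tracking the phase $e^{\pi i / 4}$ coming from the square roots $\sqrt{c\tau + d}$ when combining two half-integral-weight forms, and matching the sum $A(1/24 - n, 1/24; c)$ produced by the Poincar\'e unfolding with the sum $A(-1/24, n - 1/24; c)$ of the theorem via the standard conjugation symmetry of Kloosterman-type sums. Once these identifications are made, the Fourier expansion of $P_n$ and the residue identity are classical in the Petersson-Rademacher style.
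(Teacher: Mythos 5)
Your proposal is correct and follows essentially the same route as the paper: the product $P_{5/2,\,-n+1/24}\cdot\eta^{-1}$ is a weight $2$ form with trivial multiplier, its constant term $p(n)+b_n(0)$ vanishes, and the Fourier expansion of the Poincar\'e series (Lemma 2(ii), with $I_{3/2}$ because the seed index is negative) together with the identity $I_{3/2}(z)=\sqrt{2z/\pi}\,\frac{d}{dz}(\sinh(z)/z)$ gives the stated formula. The only cosmetic difference is that you justify the vanishing of the constant term via the residue theorem on the modular curve, whereas the paper's Lemma 3 does it by an elementary contour computation (and explicitly notes your version as an alternative).
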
    

This infinite series converges rapidly and is the basis of modern algorithms for calculating $p(n)$ (see, for instance, \cite[Section 56.13]{HJP}). 
Using a closed formula for $v_\eta(M)$, one can express  
the finite sum $A_c(n)$ in terms of Dedekind sums (see, for example, \cite[Equations~3, 4]{P1}).
There is also a simpler (and more illuminating) formula for $A_c(n)$ due to Selberg \cite[Equation~18]{AS},  
whose work is fleshed out in \cite[Lectures~22--23]{R3}.  \\   

Rademacher's original derivation \cite{R1} and subsequent refinement \cite{R2} (also in \cite[Lectures~16--19]{R3}), which involves integrating over arcs of Ford circles,
are based upon the iconic circle method of Hardy and Ramanujan \cite{HR}. A more recent approach uses the observation of Hejhal \cite[Appendix D]{H}, 
rooted in the work of Niebur \cite{N}, that negative-weight modular forms can be studied using real-analytic Poincar\'e series (see also \cite[Section 6.3]{BFOR}). 
There are other ``real-analytic proofs": \cite{P1} writes $\eta^{-1}$ as a special value of a pseudo-Poincar\'e series of weight $-1/2$, while \cite{AA} constructs a weight $5/2$ mock modular
form with $\eta^{-1}$ as its shadow. \\

In this note we give a short ``holomorphic proof" of Rademacher's formula for $p(n)$ (as well as for $p_r(n)$, $2 \le r \le 24$) 
that uses only the Fourier expansion of Poincar\'e series and the fact that any weight $2$
modular form has constant term $0$. This derivation was carried out by the first author in the early 2000s and by the second one relatively recently. 
The former was inspired by the work of Siegel \cite{S}; the latter was guided by that of Zagier \cite{Z}, who credits Kaneko with an easier proof. 
Our paper presents a collaborative exposition.

\section{Review of Poincar\'e series and weight two modular forms}

The Fourier expansion of Petersson's Poincar\'e series \cite[Equations~10, 11]{HP} extends the pioneering work of Poincar\'e \cite[Section 6]{P}. We begin this section by recalling the 
computation pertinent to our study.  \\  

Let $\Gamma_{\infty}$ be the stabilizer of $i \infty$ in $\Gamma$, that is, the subgroup generated by $S$ and $T^2 = -I$.
For any function $f$ on $\mathbb{H}$, define the Petersson slash operator by $(f |_{k,v} M)(\tau) = v(M)^{-1} (c \tau + d)^{-k} f(M \tau)$,
where $M = \big[{* \atop c}\,\,{* \atop d}\big]  \in \Gamma$ and $v$ is a multiplier system in weight $k$ for $\Gamma$. Now fix $k \in \frac{1}{2}\mathbb{Z}$, $k \ge 5/2$,
and suppose that $m \in \frac{1}{24}\mathbb{Z}$ satisfies $k - 12m \in 2 \mathbb{Z}$. Then $q^m = e^{2\pi i m \tau}$ is invariant under $|_{k,v_\eta^{24m}} S$ and $|_{k,v_\eta^{24m}} (-I)$; 
hence the \textbf{Poincar\'e series} of weight $k$ and index $m$, 
$$P_{k,m}(\tau) = \sum_M q^m \Big|_{k,v_\eta^{24m}} M,$$
where $M$ ranges over a set of coset representatives for $\Gamma_{\infty} \backslash \Gamma$, is well-defined. This series converges absolutely-uniformly on compacta of 
$\mathbb{H}$, and $P_{k,m}$ is a modular form of weight $k$ and multiplier system $v_\eta^{24m}$ on $\Gamma$. It transforms under $S$ by $P_{k,m}(\tau + 1) = e^{2\pi i m} P_{k,m}(\tau)$
and thus has a ``Fourier expansion'' of the type 
$$P_{k,m}(\tau) = \sum_{n \in \mathbb{Z} + m} c_n q^n, \,\, {\mbox{\rm{where}}} \,\, c_n =  \int_0^1 P_{k,m}(\tau) e^{-2\pi i n \tau} \, dx,$$
$\tau = x+iy$, with $y>0$ fixed. Let us compute these Fourier coefficients.  
Using the bijection between $\Gamma_{\infty} \backslash \Gamma$ and the set $\{(0,1)\} \cup \{ (c,d) \in \mathbb{N} \times \mathbb{Z} :  (c,d) = 1 \}$
to choose the coset representatives $M = \big[{a \atop c}\,\,{* \atop d}\big] $, interchanging the sum and integral, and then splitting the summation over $d$ into residue classes, we see that
\begin{align*} 
c_n &= \delta_{m,n} + \sum_{c=1}^{\infty} \sum_{\substack{d \in \mathbb{Z} \\ (c,d) = 1}} v_\eta(M)^{-24m} \int_0^1 (c \tau + d)^{-k} e^{2\pi i (m M\tau - n \tau)} \, dx \\ 
&= \delta_{m,n} + \sum_{c=1}^{\infty} \sum_{\substack{h = 0 \\ (c,h) = 1}}^{c-1} v_\eta(M_{c,h})^{-24m}  \sum_{j \in  \mathbb{Z}} \int_{j}^{j + 1} (c \tau + h)^{-k} e^{2\pi i (m M_{c,h}\tau  - n\tau)} \, dx \\ 
&=  \delta_{m,n} + \sum_{c=1}^{\infty} \sum_{\substack{h = 0 \\ (c,h) = 1}}^{c-1} v_\eta(M_{c,h})^{-24m} e^{2\pi i \frac{ma + nh}{c}} 
	\int_{-\infty}^{\infty} (c \tau + h)^{-k} e^{2\pi i [m (M_{c,h}\tau - \frac{a}{c}) - n(\tau + \frac{h}{c})]} \, dx \\ 
&= \delta_{m,n} + \sum_{c=1}^{\infty} c^{-k} A(m, n; c) \int_{-\infty + iy}^{\infty + iy} \tau^{-k} e^{-2\pi i (n \tau + m / c^2 \tau)} \, d\tau, 
\end{align*} 
\noindent where the finite sum $A(m,n;c)$ is defined in Lemma 2.
Here $\delta_{m,n}$ is the Kronecker delta function and $M_{c,h} = \big[{a \atop c}\,\, {* \atop h}\big] \in \Gamma$, where we let $d=h+cj$, with $0 \le h \le c-1$, $(c,h)=1$, and $j \in \mathbb{Z}$.
Note that $M=M_{c,h} \cdot \big[{1 \atop 0}\,\, {j \atop 1}\big]$ and so $v_\eta(M)^{-24m} = e^{-2\pi i m j} \, v_\eta(M_{c,h})^{-24m}$.    
Along the way we replaced $x$ by $x - j$, invoked the identity $M_{c,h}\tau = \frac{a}{c} - \frac{1}{c(c\tau+h)}$,  
and then replaced $\tau$ by $\tau - \frac{h}{c}$. It remains for us to scrutinize the integral, say $\mathcal{I}$, which is independent of $y > 0$.  \\

If $n \le 0$, then we push the horizontal path of integration upwards to reveal that $\mathcal{I} = 0$.
If $n > 0$, then we let $w = -2\pi in \tau$, replace $y$ by $\frac{y}{2\pi n}$, expand the (appropriate) exponential as a power series, and integrate term by term to find that 
\begin{align*}
\mathcal{I} &= (2\pi n)^{k-1} i^{-k-1}\int_{y-i\infty}^{y+i\infty} w^{-k} e^{w-4\pi^2mn/c^2w} \, dw \\ 
&= (2\pi n)^{k-1} i^{-k-1} \sum_{p=0}^{\infty} \frac{(-4\pi^2mn/c^2)^p}{p!}\int_{y-i\infty}^{y+i\infty} \frac{e^w}{w^{p+k}} \, dw \\ 
&= 2\pi i^{-k}(2\pi n)^{k-1} \sum_{p=0}^{\infty} \frac{(-4\pi^2mn/c^2)^p}{p! \, \Gamma(p+k)} \\
&= \left \{ 
		 \begin{array}{ll}2\pi i^{-k} (c^2 n/m)^{\frac{k-1}{2}}J_{k-1}(4\pi \sqrt{m n}/c)
		 		   &  \, \mbox{if } \,  m \ne 0, \vspace{.125in} \\
				       (-1)^{k/2} (2\pi)^k n^{k-1}/(k-1)! 
			           &  \, \mbox{if } \,  m  = 0, \end{array} \right.  
\end{align*}  
\noindent where we resurrected Laplace's integral formula for the reciprocal of the gamma function
(see \cite{P2} or compute the special value of the inverse Laplace transform {\it de novo}) and then used the infinite series representation
of the ordinary Bessel function $J_{k-1}$(for the case $m \ne 0$). Finally, since the modified Bessel function $I_{k-1}$ satisfies $I_{k-1}(z) = i^{1-k} J_{k-1}(iz)$,
we recover the following trichotomy of Fourier expansion formulas.
\begin{lem} Let $k \in \frac{1}{2}\mathbb{Z}$, $k \ge 5/2$,
and suppose that $m \in \frac{1}{24}\mathbb{Z}$ satisfies $k - 12m \in 2 \mathbb{Z}$. 

\noindent (i) If $m > 0$, then ${ \displaystyle P_{k,m}(\tau) = \sum_{n \in (\mathbb{Z} + m)_{>0}} c_n q^n }$, where
$$c_n = \delta_{m,n} + 2\pi i^{-k} (n/m)^{\frac{k-1}{2}} { \displaystyle \sum_{c=1}^{\infty} \frac{A(m,n;c)}{c} \, J_{k-1}(4\pi \sqrt{mn}/c) }.$$
(ii) If $m < 0$, then ${ \displaystyle P_{k,m}(\tau) = q^m + \sum_{n \in (\mathbb{Z} + m)_{>0}} c_n q^n }$, where
$$c_n =   2\pi i^{-k} (n/|m|)^{\frac{k-1}{2}} { \displaystyle \sum_{c=1}^{\infty} \frac{A(m,n;c)}{c} \, I_{k-1}(4\pi \sqrt{|m|n}/c) }.$$
(iii) If $m = 0$, then ${ \displaystyle P_{k,0}(\tau) = 1 + \sum_{n = 1}^{\infty} c_n q^n }$, where
$$c_n =  { \displaystyle \frac{ (-1)^{k/2} (2\pi)^k n^{k-1}}{(k-1)!} \sum_{c\,=\,1}^{\infty} \frac{A(0,n;c)}{c^k} 
						= \frac{(-1)^{k/2} (2\pi)^k}{(k-1)! \, \zeta(k)} \, \sigma_{k-1}(n) = -\frac{2k}{B_k} \, \sigma_{k-1}(n) }.$$
Here $A(m,n;c)$ denotes the \textbf{generalized Kloosterman sum} 
$$A(m,n;c) = \sum_{\substack{d ( \mbox{\tiny{\rm{mod\,}}}{c} ) \\ (c,d) = 1}}  v_\eta(M)^{-24m} e^{2\pi i \frac{ma+nd}{c}}, \; \;   
M = { \textstyle \big[{a \atop c}\,\,{* \atop d}\big] \in \Gamma  };$$     
$\sigma_{k-1}(n) = \sum_{ {d|n}, \, {d \,> \,0}} d^{k-1}$ is the divisor function; $\zeta$ symbolizes the Riemann zeta-function; and $B_k$ is the $k$th Bernoulli number.
(Note that $P_{k,0}$ is the normalized Eisenstein series of weight $k$, usually denoted by $E_k$, for which we have appended two familiar formulas
based upon core knowledge associated with the Ramanujan sum $A(0,n;c)$ and $\zeta(k)$.)  
\end{lem}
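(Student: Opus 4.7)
My plan is to carry out the standard "unfolding" calculation for the Poincar\'e series, which is essentially laid out in the paragraphs preceding the lemma; the task is to organize it cleanly into the three cases. Since $P_{k,m}(\tau+1) = e^{2\pi i m} P_{k,m}(\tau)$, the Fourier coefficient of $q^n$ (for $n \in \mathbb{Z}+m$) equals $c_n = \int_0^1 P_{k,m}(\tau)\,e^{-2\pi i n\tau}\,dx$ for any fixed $y>0$. I would parametrize $\Gamma_\infty\backslash\Gamma$ by pairs $(c,d)$ with $c\ge 0$ and $\gcd(c,d)=1$, separate out the identity coset (contributing $\delta_{m,n}$ when $m\ge 0$, or the polar term $q^m$ when $m<0$), and then interchange sum and integral in the tail. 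Writing $d=h+cj$ with $0\le h\le c-1$ and $j\in\mathbb{Z}$, and noting that $v_\eta(M_{c,h}\cdot T_j)^{-24m} = e^{-2\pi i m j}\,v_\eta(M_{c,h})^{-24m}$, I would use the $j$-sum to splice the pieces $\int_j^{j+1}$ into one integral over $\mathbb{R}$.

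Next, I would apply the identity $M_{c,h}\tau = \tfrac{a}{c} - \tfrac{1}{c(c\tau+h)}$ and the substitution $\tau \mapsto \tau - h/c$, which collapses the $h$-sum into the generalized Kloosterman sum $A(m,n;c)$ and leaves the integral
\[
\mathcal{I} = \int_{-\infty+iy}^{\infty+iy} \tau^{-k}\,e^{-2\pi i(n\tau + m/c^2\tau)}\,d\tau.
\]
For $n\le 0$, shifting the horizontal contour to $+i\infty$ kills $\mathcal{I}$, showing that only positive exponents contribute to the expansion. For $n>0$, the substitution $w = -2\pi i n\tau$ turns $\mathcal{I}$ into a contour integral in the $w$-plane whose integrand is $w^{-k}e^{w - 4\pi^2 mn/c^2 w}$; expanding the second factor in a power series in $1/w$ and applying Laplace's integral formula $\frac{1}{\Gamma(s)} = \frac{1}{2\pi i}\int_{y-i\infty}^{y+i\infty} w^{-s}e^w\,dw$ term by term produces
\[
\mathcal{I} = 2\pi i^{-k}(2\pi n)^{k-1}\sum_{p=0}^{\infty}\frac{(-4\pi^2 mn/c^2)^p}{p!\,\Gamma(p+k)}.
\]
When $m\neq 0$ this series is the defining power series of the Bessel function $J_{k-1}$ evaluated at $4\pi\sqrt{mn}/c$ (up to a power factor), and when $m=0$ only the $p=0$ term survives.

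The three cases of the lemma then follow: case (i) is immediate; case (ii) uses $\sqrt{mn} = i\sqrt{|m|n}$ together with $I_{k-1}(z) = i^{1-k}J_{k-1}(iz)$ to convert $J_{k-1}$ into $I_{k-1}$; case (iii) follows from the elementary identities $A(0,n;c) = c_c(n)$ (Ramanujan's sum), $\sum_{c\ge 1} c_c(n)/c^k = \sigma_{k-1}(n)/\zeta(k)$, and the classical evaluation of $\zeta(k)$ for even $k$ in terms of $B_k$. The main obstacle is really only bookkeeping: justifying the interchange of the (absolutely-uniformly convergent) sum with the integral, the term-by-term integration of the Laurent expansion, and tracking the $i^{-k}$ and sign conventions so that the final Bessel/Eisenstein formulas come out with the precise constants stated in the lemma.
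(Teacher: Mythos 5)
Your proposal is correct and follows essentially the same route as the paper: unfolding over $\Gamma_\infty\backslash\Gamma$, splitting $d=h+cj$ to assemble the Kloosterman sum and a single line integral, killing it by contour shift for $n\le 0$, and for $n>0$ substituting $w=-2\pi i n\tau$, expanding, and invoking Laplace's integral for $1/\Gamma$ to land on the $J$-Bessel series, with the $I$-Bessel and Ramanujan-sum specializations handled exactly as in the text preceding the lemma. No substantive differences.
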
 

We conclude this section by reproducing the following key fact (and its proof) from \cite[Section 3]{P3}.     
\begin{lem}
Let $F$ be an \textbf {unrestricted modular form} of weight $2$ and trivial multiplier on $\Gamma$. By this we mean that   
$F$ is holomorphic in $\mathbb{H}$ and satisfies the modular relation
$F(M\tau) = (c \tau + d)^2F(\tau)$, $\tau \in \mathbb{H}$,
for all $M = \big[{* \atop c}\,\,{* \atop d}\big] \in \Gamma$.      
So $F$ must have the expansion
$$ F(\tau) = \sum_{n \in \mathbb{Z}} a_n q^n,$$   
where $q = e^{2\pi i \tau}$ and  $\tau \in \mathbb{H}$. Then the constant term of $F$, given by $a_0$, is $0$. 
\end{lem}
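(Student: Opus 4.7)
My plan is to apply Cauchy's theorem to the differential form $\omega := F(\tau)\, d\tau$ on a truncated fundamental domain for $\Gamma$. Because $F$ has weight $2$ and trivial multiplier, the pullback satisfies $M^*\omega = (c\tau+d)^{2} F(\tau) \cdot (c\tau+d)^{-2}\, d\tau = \omega$ for every $M = \big[{* \atop c}\,\,{* \atop d}\big] \in \Gamma$; in particular $\omega$ is invariant under both the translation $\tau\mapsto\tau+1$ and the inversion $\tau\mapsto -1/\tau$, and it is holomorphic on all of $\mathbb{H}$.

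Fix $Y>1$ and let $\mathcal{F}_Y$ denote the standard fundamental domain truncated at height $Y$: the region bounded by the vertical segments $\mathrm{Re}(\tau)=\pm 1/2$ with $\sqrt{3}/2 \le \mathrm{Im}(\tau)\le Y$, the horizontal segment $\mathrm{Im}(\tau)=Y$, and the arc of $|\tau|=1$ from $\rho=e^{2\pi i/3}$ through $i$ to $\rho+1=e^{\pi i/3}$. Cauchy's theorem gives $\oint_{\partial\mathcal{F}_Y}\omega=0$, and I would then show that three of the four boundary contributions vanish. The two vertical segments cancel each other via the periodicity $F(\tau+1)=F(\tau)$. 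For the bottom arc I would split it at the fixed point $i$ of the inversion: since $\tau\mapsto -1/\tau$ sends $e^{i\theta}$ to $e^{i(\pi-\theta)}$, the inversion carries the sub-arc from $\rho$ to $i$ to the sub-arc from $\rho+1$ to $i$, which with the orientation inherited from $\partial\mathcal{F}_Y$ is the reverse of the remaining sub-arc, so invariance of $\omega$ forces the two halves to cancel.

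The only surviving contribution is the horizontal top segment from $1/2+iY$ to $-1/2+iY$, equal to $-\int_{-1/2}^{1/2} F(x+iY)\, dx$. Because $F$ is holomorphic on $\mathbb{H}$ and invariant under $\tau\mapsto\tau+1$, its Laurent expansion $F(\tau) = \sum_{n\in\mathbb{Z}} a_n q^n$ converges absolutely and uniformly on compacta of the punctured disk $0<|q|<1$, so term-by-term integration along the line $\mathrm{Im}(\tau)=Y$ is valid and leaves only $-a_0$. Hence $a_0=0$. The most delicate step is the orientation bookkeeping on the unit arc: one must verify that cutting at $i$ and applying the inversion gives opposite orientations on the two half-arcs so that invariance of $\omega$ produces cancellation rather than doubling; everything else reduces to standard applications of Cauchy's theorem and termwise integration.
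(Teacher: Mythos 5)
Your proof is correct and is essentially the paper's argument: both rest on the invariance of $F(\tau)\,d\tau$ under $\Gamma$ and on the fact that the inversion $\tau \mapsto -1/\tau$ carries the unit-circle arc from $\rho$ to $\rho+1$ onto itself with reversed orientation, forcing $a_0 = -a_0$. The paper merely shortcuts your contour bookkeeping by observing directly that $a_0 = \int_{\rho}^{\rho+1} F(\tau)\,d\tau$ along that arc (since the integral of each term $a_n q^n$, $n \neq 0$, over any path joining $\tau_0$ to $\tau_0+1$ vanishes), so the vertical sides and the split at $i$ never need to appear explicitly.
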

\begin{proof} Rudimentary complex analysis reveals that
$$ a_0 = \int_{\rho}^{\rho + 1} F(\tau) \, d\tau  = \int_{\rho}^{\rho + 1} \frac{F(-1/\tau)}{\tau^2} \, d\tau =  \int_{\rho + 1}^{\rho} F(\zeta) \, d\zeta = -a_0,$$
where $\rho = e^{2\pi i/3}$ and the path is along the unit circle; so $a_0 = 0$.
\end{proof} Alternatively, Lemma 3 (for a modular form with a finite principal part) follows from the valence formula (plus some basic knowledge)
or from the residue theorem on the compact Riemann surface $\Gamma \backslash (\mathbb{H} \cup {\mathbb Q} \cup  \{ i\infty \})$
applied to the $\Gamma$-invariant differential $F \, d\tau$. The use of the latter 
can be interpreted as a special case of the Serre duality pairing (see \cite[Section 3]{B}). 

\section{Proof of Rademacher's formula}

\begin{thrm} Let $1 \le r \le 24$. Then the number of $r$-color partitions of $n \in \mathbb{N}$ is 
$$p_r(n) = -c_{r/24},$$
where $c_{r/24}$ is the coefficient of $q^{r/24}$ in the expansion for the Poincar\'e series $P_{2+r/2,-n+r/24}$ that is provided in Lemma 2.  
\end{thrm}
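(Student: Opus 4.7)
The plan is to apply Lemma~3 to the product
$$F(\tau) = \eta(\tau)^{-r} \, P_{2 + r/2,\, -n + r/24}(\tau).$$
First I would verify that $F$ is an unrestricted weight $2$ modular form with trivial multiplier on $\Gamma$: the weight is $-r/2 + (2 + r/2) = 2$; the multiplier is
$$v_\eta^{-r} \cdot v_\eta^{24(-n + r/24)} = v_\eta^{-r - 24n + r} = v_\eta^{-24n},$$
which is trivial since $v_\eta^{24} \equiv 1$; and holomorphy on $\mathbb{H}$ is inherited from the factors, with $\eta$ non-vanishing on $\mathbb{H}$. Lemma~3 then forces the constant Fourier coefficient $a_0$ of $F$ to vanish.

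Next I would compute $a_0$ by Cauchy-multiplying the two Fourier series. From Euler's generating identity,
$$\eta(\tau)^{-r} = \sum_{a=0}^{\infty} p_r(a) \, q^{a - r/24},$$
while Lemma~2 supplies, in the generic case $-n + r/24 < 0$, the expansion
$$P_{2 + r/2,\, -n + r/24}(\tau) = q^{-n + r/24} + \sum_{\substack{b \in \mathbb{Z} + r/24 \\ b > 0}} c_b \, q^b,$$
and in the single degenerate case $(r,n) = (24,1)$ one has instead $P_{14,0}(\tau) = 1 + \sum_{b \ge 1} c_b \, q^b$. In either situation the contributions to the $q^0$ coefficient of $F$ come from pairs satisfying $a - r/24 + b = 0$. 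The leading term of the Poincar\'e series pairs with $a = n$ and contributes $p_r(n)$, while every other term has $b > 0$, forcing $a < r/24 \le 1$, hence $a = 0$ and $b = r/24$, which contributes $p_r(0) \, c_{r/24} = c_{r/24}$. Therefore $a_0 = p_r(n) + c_{r/24}$, and equating this to zero yields the desired identity.

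The main (and essentially only) subtlety is the role of the bound $r \le 24$: it is precisely what prevents any $a \ge 1$ from pairing with a positive-index term of the Poincar\'e series to contribute additional $c_b$ with $0 < b < r/24$. The weight condition $2 + r/2 \ge 5/2$ needed to apply Lemma~2 coincides with $r \ge 1$, so the entire stated range $1 \le r \le 24$ is handled uniformly.
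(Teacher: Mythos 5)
Your proposal is correct and follows essentially the same route as the paper: form the weight-$2$, trivial-multiplier product $\eta^{-r}\,P_{2+r/2,\,-n+r/24}$ and apply Lemma~3 to kill its constant term, which equals $p_r(n)+c_{r/24}$. You simply spell out more carefully than the paper does why $r\le 24$ ensures no other cross terms contribute to the constant coefficient, and you correctly flag the degenerate case $(r,n)=(24,1)$ where the Poincar\'e series is the Eisenstein series $P_{14,0}$.
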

\begin{proof} Observe that the modular form 
$$P_{2+r/2,-n+r/24}(\tau) \cdot \eta(\tau)^{-r} = \Big(q^{-n+r/24} + c_{r/24} q^{r/24} + \cdots \Big) \Big( \sum_{\ell=0}^{\infty} p_r(\ell) q^{\ell - r/24} \Big)$$ 
is of weight $2$ and trivial multiplier on $\Gamma$, so its constant term $p_r(n) + c_{r/24}$ is $0$ by Lemma 3.
\end{proof}

The traditional form of Rademacher's formula as in Theorem 1 follows after 
applying the well-known identity $I_{3/2}(z) = \sqrt{\frac{2z}{\pi}} \frac{d}{dz}(\sinh(z)/z)$ and the relation $i^{-1/2} A(-n+1/24,1/24;c) = i^{1/2} A(-1/24,n-1/24;c)$. 
Actually, one can avoid altogether the use of $I_{3/2}$ by rewriting the second sum over $p$ specialized to $k = 5/2$---see the above computation of 
$\mathcal{I}$---in terms of $\sinh$. (For this easy and standard calculation, consult \cite[Equation~21]{P1}.) 
Amazingly, the formula for $p_{24}(n)$ follows already from the aforementioned work of Poincar\'e \cite{P} coupled with Lemma 3. 
Tangentially, we note the bilateral nature of the equality in Theorem 4. To wit, any known formula for $p_r(n)$ provides us with one for $c_{r/24}$ and, in combination
with Lemma 2, this can produce some curious identities. A simple illustration of this stems from the obvious fact that $p_r(1) = r$. Amusingly, the particular case
$p_{24}(1) = 24$ leads instantly to the value of $\zeta(14)$ (as well as that of $B_{14}$). 
Some additional familiar identities (connected with expansions of $0$) can be recovered by considering the product of $\eta^{-r}$, $1 \le r \le 24$,
with the cuspidal Poincar\'e series $P_{2+r/2,n+r/24}$, that is, where $n \in \mathbb{Z}_{\ge 0}$.  \\

We remark that the same method of this paper can be used to reestablish the Rademacher and Zuckerman expressions for the Fourier coefficients
of any modular form having negative real weight and multiplier system on $\Gamma$. 
(This includes a formula for $p_r(n)$, where $r,n \in \mathbb{N}$ with $n \ge r/24$, that depends upon the principal part of the Fourier expansion of $\eta^{-r}$.)
More broadly, an enhanced version of the method can be employed to capture explicit formulas for the Fourier coefficients of an arbitrary Niebur modular integral
(more recently also known as a mock modular form)
of negative real weight and multiplier system on $\Gamma$. This work has been carried out by the first author, who plans to present it in a forthcoming article.  \\

\textbf{Acknowledgments:} The first author is grateful for the influence of Donald J. Newman, problem solver {\it par excellence}, who taught him about partitions
some thirty years ago, and who instilled in him the notion that every mathematical truth has a simple proof. 
The second author thanks Ken Ono for his encouragement to write out this argument in detail. 

\bibliographystyle{plainnat}
\bibliography{\jobname}

\begin{thebibliography}{21}
\providecommand{\natexlab}[1]{#1}
\providecommand{\url}[1]{\texttt{#1}}
\expandafter\ifx\csname urlstyle\endcsname\relax
  \providecommand{\doi}[1]{doi: #1}\else
  \providecommand{\doi}{doi: \begingroup \urlstyle{rm}\Url}\fi

\bibitem[Ahlgren and Andersen(2015)]{AA}
Scott Ahlgren and Nickolas Andersen.
\newblock Weak harmonic {M}aass forms of weight 5/2 and a mock modular form for
  the partition function.
\newblock \emph{Res. Number Theory}, 1:\penalty0 Art. 10, 16, 2015.
\newblock ISSN 2363-9555 (Online).
\newblock \doi{10.1007/s40993-015-0011-9}.
\newblock URL \url{https://doi.org/10.1007/s40993-015-0011-9}.

\bibitem[Atiyah(1987)]{A}
Michael Atiyah.
\newblock The logarithm of the {D}edekind {$\eta$}-function.
\newblock \emph{Math. Ann.}, 278\penalty0 (1-4):\penalty0 335--380, 1987.
\newblock ISSN 0025-5831.
\newblock \doi{10.1007/BF01458075}.
\newblock URL \url{https://doi.org/10.1007/BF01458075}.

\bibitem[Borcherds(1999)]{B}
Richard~E. Borcherds.
\newblock The {G}ross-{K}ohnen-{Z}agier theorem in higher dimensions.
\newblock \emph{Duke Math. J.}, 97\penalty0 (2):\penalty0 219--233, 1999.
\newblock ISSN 0012-7094.
\newblock \doi{10.1215/S0012-7094-99-09710-7}.
\newblock URL \url{https://doi.org/10.1215/S0012-7094-99-09710-7}.

\bibitem[Bringmann et~al.(2017)Bringmann, Folsom, Ono, and Rolen]{BFOR}
Kathrin Bringmann, Amanda Folsom, Ken Ono, and Larry Rolen.
\newblock \emph{Harmonic {M}aass forms and mock modular forms: theory and
  applications}, volume~64 of \emph{American Mathematical Society Colloquium
  Publications}.
\newblock American Mathematical Society, Providence, RI, 2017.
\newblock ISBN 978-1-4704-1944-8.

\bibitem[Bruinier and Ono(2013)]{BO}
Jan~Hendrik Bruinier and Ken Ono.
\newblock Algebraic formulas for the coefficients of half-integral weight
  harmonic weak {M}aass forms.
\newblock \emph{Adv. Math.}, 246:\penalty0 198--219, 2013.
\newblock ISSN 0001-8708.
\newblock \doi{10.1016/j.aim.2013.05.028}.
\newblock URL \url{https://doi.org/10.1016/j.aim.2013.05.028}.

\bibitem[Hardy and Ramanujan(1918)]{HR}
Godfrey~H. Hardy and Srinivasa Ramanujan.
\newblock Asymptotic formul{\ae} in combinatory analysis.
\newblock \emph{Proc. Lond. Math. Soc. (2)}, 17:\penalty0 75--115, 1918.
\newblock ISSN 0024-6115.
\newblock \doi{10.1112/plms/s2-17.1.75}.
\newblock URL \url{https://doi.org/10.1112/plms/s2-17.1.75}.

\bibitem[Hart et~al.(2015)Hart, Johansson, and Pancratz]{HJP}
William Hart, Fredrik Johansson, and Sebastian Pancratz.
\newblock Flint 2.5.2: Fast library for number theory, 2015.
\newblock URL \url{http://www.flintlib.org/flint-2.5.pdf}.

\bibitem[Hejhal(1983)]{H}
Dennis~A. Hejhal.
\newblock \emph{The {S}elberg trace formula for {${\rm PSL}(2,\,{\mathbb R})$}.
  {V}ol. 2}, volume 1001 of \emph{Lecture Notes in Mathematics}.
\newblock Springer-Verlag, Berlin, 1983.
\newblock ISBN 3-540-12323-7.
\newblock \doi{10.1007/BFb0061302}.
\newblock URL \url{https://doi.org/10.1007/BFb0061302}.

\bibitem[Knopp(1993)]{K}
Marvin~I. Knopp.
\newblock \emph{Modular functions in analytic number theory}, volume 337 of
  \emph{AMS Chelsea Publishing}.
\newblock American Mathematical Society, Providence, RI, second edition, 1993.
\newblock ISBN 0-8284-0337-6.

\bibitem[Niebur(1973)]{N}
Douglas Niebur.
\newblock A class of nonanalytic automorphic functions.
\newblock \emph{Nagoya Math. J.}, 52:\penalty0 133--145, 1973.
\newblock ISSN 0027-7630.
\newblock \doi{10.1017/S0027763000015932}.
\newblock URL \url{http://projecteuclid.org/euclid.nmj/1118794882}.

\bibitem[Petersson(1932)]{HP}
Hans Petersson.
\newblock {\"U}ber die {E}ntwicklungskoeffizienten der automorphen {F}ormen.
\newblock \emph{Acta Math.}, 58\penalty0 (1):\penalty0 169--215, 1932.
\newblock ISSN 0001-5962.
\newblock \doi{10.1007/BF02547776}.
\newblock URL \url{https://doi.org/10.1007/BF02547776}.

\bibitem[Poincar\'e(1912)]{P}
Henri Poincar\'e.
\newblock Fonctions modulaires et fonctions fuchsiennes.
\newblock \emph{Ann. Fac. Sci. Toulouse Sci. Math. Sci. Phys. (3)}, 3:\penalty0
  125--149, 1912.
\newblock ISSN 0996-0481.
\newblock URL \url{http://www.numdam.org/item/AFST_1911_3_3__125_0}.

\bibitem[Pribitkin(2000)]{P1}
Wladimir de~Azevedo Pribitkin.
\newblock Revisiting {R}ademacher's formula for the partition function
  {$p(n)$}.
\newblock \emph{Ramanujan J.}, 4\penalty0 (4):\penalty0 455--467, 2000.
\newblock ISSN 1382-4090.
\newblock \doi{10.1023/A:1009828302300}.
\newblock URL \url{https://doi.org/10.1023/A:1009828302300}.

\bibitem[Pribitkin(2002)]{P2}
Wladimir de~Azevedo Pribitkin.
\newblock Laplace's integral, the gamma function, and beyond.
\newblock \emph{Amer. Math. Monthly}, 109\penalty0 (3):\penalty0 235--245,
  2002.
\newblock ISSN 0002-9890.
\newblock \doi{10.2307/2695353}.
\newblock URL \url{https://doi.org/10.2307/2695353}.

\bibitem[Pribitkin(2016)]{P3}
Wladimir de~Azevedo Pribitkin.
\newblock A generalization of {K}nopp's {O}bservation on {R}amanujan's
  tau-function.
\newblock \emph{Ramanujan J.}, 41\penalty0 (1-3):\penalty0 519--542, 2016.
\newblock ISSN 1382-4090.
\newblock \doi{10.1007/s11139-015-9756-y}.
\newblock URL \url{https://doi.org/10.1007/s11139-015-9756-y}.

\bibitem[Rademacher(1937)]{R1}
Hans Rademacher.
\newblock On the partition function {$p(n)$}.
\newblock \emph{Proc. Lond. Math. Soc. (2)}, 43\penalty0 (4):\penalty0
  241--254, 1937.
\newblock ISSN 0024-6115.
\newblock \doi{10.1112/plms/s2-43.4.241}.
\newblock URL \url{https://doi.org/10.1112/plms/s2-43.4.241}.

\bibitem[Rademacher(1943)]{R2}
Hans Rademacher.
\newblock On the expansion of the partition function in a series.
\newblock \emph{Ann. of Math. (2)}, 44\penalty0 (3):\penalty0 416--422, 1943.
\newblock ISSN 0003-486X.
\newblock \doi{10.2307/1968973}.
\newblock URL \url{https://doi.org/10.2307/1968973}.

\bibitem[Rademacher(1954-1955)]{R3}
Hans Rademacher.
\newblock \emph{Lectures on analytic number theory}.
\newblock Tata Institute of Fundamental Research, Bombay, 1954-1955.
\newblock URL \url{http://www.math.tifr.res.in/~publ/ln/tifr02.pdf}.
\newblock Notes by K. Balagangadharan and V. Venugopal Rao.

\bibitem[Selberg(1989)]{AS}
Atle Selberg.
\newblock Reflections around the {R}amanujan centenary.
\newblock In \emph{Collected papers, {V}ol. {I}}, pages 695--706.
  Springer-Verlag, Berlin, 1989.

\bibitem[Siegel(1969)]{S}
Carl~Ludwig Siegel.
\newblock Berechnung von {Z}etafunktionen an ganzzahligen {S}tellen.
\newblock \emph{Nachr. Akad. Wiss. G\"ottingen Math.-Phys. Kl. II},
  1969\penalty0 (10):\penalty0 87--102, 1969.
\newblock ISSN 0065-5295.

\bibitem[Zagier(2002)]{Z}
Don Zagier.
\newblock Traces of singular moduli.
\newblock In \emph{Motives, polylogarithms and {H}odge theory, {P}art {I}
  ({I}rvine, {CA}, 1998)}, volume~3 of \emph{Int. Press Lect. Ser.}, pages
  211--244. Int. Press, Somerville, MA, 2002.

\end{thebibliography}

\end{document}